\newtheorem{theorem}{Theorem}
\theoremstyle{plain}
\newtheorem{corollary}{Corollary}
\newtheorem{lemma}{Lemma}
\newtheorem{proposition}{Proposition}
\newtheorem{remark}{Remark}
\numberwithin{equation}{section}
\begin{document}
\title[Kesten--McKay]{On the generalized Kesten--McKay distributions}
\author{Pawe\l\ J. Szab\l owski}
\address{Warsaw University of Technology,\\
Department of Mathematics and Information Sciences}
\email{pawel.szablowski@gmail.com}
\date{June 2015}
\subjclass[2010]{Primary 60E05,05E05; Secondary 62H05,60J10}
\keywords{Kesten--McKay, Bernstein-Szeg\"{o} distributions, Chebyshev
polynomials, orthogonal polynomials, Askey--Wilson polynomials, moments,
symmetric rational functions, multivariate distributions, Cauchy (Hilbert)
transform.}
\thanks{The author is deeply grateful to the unknown referee for careful
checking of all formulae and moreover pointing out numerous misprint and
misspellings found in the paper.}

\begin{abstract}
We examine the properties of distributions with the density of the form: $%
\frac{2A_{n}c^{n-2}\sqrt{c^{2}-x^{2}}}{\pi
\prod_{j=1}^{n}(c(1+a_{j}^{2})-2a_{j}x)},$ where $c,a_{1},\ldots ,a_{n}$ are
some parameters and $A_{n}$ a suitable constant. We find general forms of $%
A_{n}$, of $k-$th moment and of $k-$th polynomial orthogonal with respect to
such measures. We also calculate Cauchy transforms of these measures. We
indicate connections of such distributions with distributions and
polynomials forming the so called Askey--Wilson scheme. On the way we prove
several identities concerning rational symmetric functions. Finally, we
consider the case of parameters $a_{1},\ldots ,a_{n}$ forming conjugate
pairs and give some multivariate interpretations based on the obtained
distributions at least for the cases $n=2,4,6.$
\end{abstract}

\maketitle

\section{Introduction}

The purpose of this note is to analyze properties of the following family of
distributions having densities of the form:%
\begin{equation}
f_{KMKn}(x|c,a_{1},\ldots ,a_{n})\allowbreak =\allowbreak \frac{2A_{n}c^{n-2}%
\sqrt{c^{2}-x^{2}}}{\pi \prod_{j=1}^{n}(c(1+a_{j}^{2})-2a_{j}x)},
\label{genK}
\end{equation}%
defined for $n\geq 0,$ $\left\vert x\right\vert \leq c,$ with $c>0,$ $%
\left\vert a_{j}\right\vert <1,$ $j\allowbreak =\allowbreak 1,\ldots ,n.$
Here $A_{n}$ is a normalizing constant being the function of parameters $%
a_{1},\ldots ,a_{n}$. We will call this family generalized Kesten--McKay
distributions. The name is justified by the fact that the distribution with
the following density: 
\begin{equation}
f_{KMK2}(x|2/a,a,-a)\allowbreak =\allowbreak \frac{v\sqrt{4(v-1)-x^{2}}}{%
2\pi (v^{2}-x^{2})},  \label{r-K}
\end{equation}%
where $v\allowbreak =\allowbreak 1+1/a^{2}$ and $\left\vert a\right\vert $ $%
<1$ has been defined, described and what is more, derived in \cite{Kesten 59}
or \cite{McKay81}. Then the name Kesten-McKay distribution has been
attributed to this distribution in the literature that appeared after 1981.

Thus, it is justified to call the distribution defined by (\ref{genK}) a
generalized Kesten-McKay (GKM) distribution.

Note also that for $n\allowbreak =\allowbreak 0$ the distribution with the
density $f_{KMK0}(x|c)$ becomes Wigner or semicircle distribution with
parameter $c.$

It should be underlined that for $n=0,1,2$ distributions of this kind appear
not only in the context of random matrices, random graphs which is a typical
application of the Kesten--McKay distributions (see e.g.\cite{Oren09}, \cite%
{Oren10}, \cite{sodin 2007}), but also in the context of the so-called free
probability a part of the non-commutative probability theory recently
rapidly developing. One of the first papers where semicircle and related
distribution appeared in the non-commutative probability context is \cite{Bo}%
.

For $n\allowbreak <5$ distribution $f_{KMKn}$ can be identified as the
special case of the Askey--Wilson chain of distributions that make
orthogonal $5$ families of polynomials of the so-called Askey--Wilson
scheme. For the reference see e.g. \cite{Szab14}. For $n\geq 5$ the
distributions $f_{KMKn}$ were not yet described in detail.

It has to be noted that in $2009$ there appeared paper \cite{DGIX09}.
Although the aim of it was to analyze two-dimensional measures on the plane
of the form:%
\begin{equation*}
\frac{\sqrt{(1-x^{2})(1-y^{2})}}{g(x,y)},
\end{equation*}%
where $g$ was a polynomial in $x$ and $y$, it also contains some results
concerning one-dimensional case. The one-dimensional distribution considered
there, is very much alike the distribution we consider in this paper. The
authors of the paper call it Bernstein-Szeg\"{o} distribution (for
comparison see \cite{Szego39}). The method they use to analyze these
distributions allow them to consider only the case of even degree $n$ of
polynomial $g$. The results of the paper are general and hence rather
imprecise. They were obtained by quite complicated integration on the
complex plane.

Our results are precise, since we assume the exact form of the polynomial in
the denominator. Namely, we assume the knowledge of the roots of this
polynomial. Besides, as mentioned above, the order of this polynomial can be
also odd. Moreover, due to our assumptions we are able to give precise form
of the polynomials orthogonal with respect to this measure.

Our methods are simple and heavily exploit the properties of the Chebyshev
polynomials of the second kind. By using them, we are able to obtain some
interesting identities concerning Chebyshev polynomials of the second kind
as well as symmetric rational functions like for example the formulae given
in Remark \ref{gen} or Corollary \ref{identi}.

It should be mentioned that by considering polynomials of even degree,
having pairwise conjugate roots, we are not only able to cover the case of
two-dimensional measures on the plane of the form mentioned above, but also
generalize the results to $3$ or more dimensions defining new distributions
and finding they marginals. Compare Section \ref{complex} and the remarks at
the end of this section.

Hence our results give substance and generalize the results of \cite{DGIX09}.

We will present a unified approach and recall and collect information on
this family that is scattered though literature.

Let us observe first, that if $X\allowbreak \sim \allowbreak
f_{KMKn}(x|c,a_{1},\ldots ,a_{n})$ and \newline
$Y\allowbreak \sim \allowbreak f_{KMKn}(x|1,a_{1},\ldots ,a_{n}),$ then $%
X\allowbreak \sim \allowbreak cY.$ Hence we will consider further only
distributions with the density $f_{KMKn}(x|1,a_{1},\ldots ,a_{n})$ which we
will denote for brevity $f_{Kn}(x|a_{1},\ldots ,a_{n}).$

To start our analysis let us recall that for $\left\vert a_{j}\right\vert <1$
we have 
\begin{equation}
1/(1+a_{j}^{2}-2a_{j}x)\allowbreak =\allowbreak \sum_{k=0}^{\infty
}a_{j}^{k}U_{k}(x),  \label{f_gen}
\end{equation}%
where $U_{k}$ denotes $k-$th Chebyshev polynomial of the second kind and
that:\newline
\begin{equation*}
\int_{-1}^{1}\frac{2}{\pi }U_{k}(x)U_{j}(x)\sqrt{1-x^{2}}dx\allowbreak
=\allowbreak \left\{ 
\begin{array}{ccc}
0 & if & j\neq k \\ 
1 & if & j=k%
\end{array}%
\right. .
\end{equation*}%
Recall also for completeness of the paper that $U_{-1}(x)=\allowbreak 0,$ $%
U_{0}(x)\allowbreak =\allowbreak 1,$ and for $m\geq 1$ we have 
\begin{equation}
2xU_{m}(x)\allowbreak =\allowbreak U_{m+1}(x)+U_{m-1}(x).  \label{3tU}
\end{equation}
The above mentioned formula is traditionally extended to all integer $m,$
that leads to the following extended definition of the Chebyshev polynomials:%
\begin{equation}
U_{-k}(x)\allowbreak =\allowbreak U_{-k-2}(x),  \label{uu}
\end{equation}
for $k\geq 1.$

Notice that the form of the density (\ref{genK}) fits the scheme of
distributions and orthogonal polynomials that was considered in \cite{Szab14}
and hence we can use ideas and results presented there. First of all, let us
observe that the densities considered in this paper are the special cases of
the distributions known from the so-called Askey--Wilson scheme of
distributions and orthogonal polynomials obtained from the general ones by
setting $q\allowbreak =\allowbreak 0.$ $q$ is a special parameter called the
base within the theory of Askey--Wilson polynomials. More precisely, for $%
n\allowbreak =\allowbreak 0$ we deal with the so-called $q-$Hermite
polynomials and the so-called $q-$Normal distribution. Wigner distribution
and Chebyshev polynomials of the second kind are their special case for $%
q\allowbreak =\allowbreak 0$. For $n\allowbreak =\allowbreak 1$ we deal with
the so-called continuous big $q-$Hermite polynomials and the distribution
that makes them orthogonal. When we set $q\allowbreak =\allowbreak 0$ and $%
n\allowbreak =\allowbreak 1$ we deal with the distribution $f_{K1}(x|a_{1})$%
. For $n\allowbreak =\allowbreak 2$ we deal with the so-called
Al-Salam--Chihara polynomials and the distribution that make these
polynomials orthogonal. Setting now $q\allowbreak =\allowbreak 0$ leads us
to the distribution $f_{K2}(x|a_{1},a_{2})$. Further for $n\allowbreak
=\allowbreak 3$ we deal with the so-called dual Hahn polynomials and the
distribution that makes these polynomials orthogonal. Setting $q\allowbreak
=\allowbreak 0$ leads us to $f_{K3}(x|a_{1},a_{2},a_{3})$. Finally, for $%
n\allowbreak =\allowbreak 4$ we deal with the so-called Askey--Wilson
polynomials and the distribution that make them orthogonal. Setting $%
q\allowbreak =\allowbreak 0$ we get $f_{K4}(x|a_{1},a_{2},a_{3},a_{4})$.

Hence in particular, we know the families of orthogonal polynomials that our
distributions make orthogonal, for $n\allowbreak =\allowbreak 0,\ldots ,4$.
For the precise definitions and further references see e.g. \cite{KLS} or 
\cite{Szab14}.

In the sequel we will use exchangeably $\mathbf{a}_{n}$ and $(a_{1},\ldots
,a_{n})$ depending on the required brevity and clarity.

Let us denote $\prod_{j=1,j\neq i}^{n}$ as $\prod_{j\neq i}^{n}$.

We have the following general observation.

\begin{theorem}
If $a_{i}\neq a_{j},$ $i\neq j,$ $i,j\allowbreak =\allowbreak 1,\ldots ,n,$
then \newline
i) the constant $A_{n}$ in (\ref{genK}) is given by: 
\begin{equation}
A_{n}\allowbreak =A_{n}(\mathbf{a}_{n})=\allowbreak 1/\sum_{i=1}^{n}\frac{%
a_{i}^{n-1}}{\prod_{j\neq i}^{n}(a_{i}-a_{j})(1-a_{i}a_{j})}.  \label{stala}
\end{equation}%
ii) Let us define constants $B_{n,k}$ for $n\,<0$ as $B_{n,k}\allowbreak
=\allowbreak 0$ and for $n,k\geq 0$ by:%
\begin{equation*}
B_{n,k}(\mathbf{a}_{n})=B_{n,k}=\int_{-1}^{1}U_{k}(x)f_{Kn}(x|\mathbf{a}%
_{n})dx,
\end{equation*}%
and for $n\geq 0$ and $k\leq -1$ by: 
\begin{equation}
B_{n,k}=\left\{ 
\begin{array}{ccc}
0 & if & k=-1 \\ 
-B_{n,-k-2} & if & k\leq -2%
\end{array}%
\right. .  \label{-b}
\end{equation}%
Then we have: 
\begin{equation}
B_{n,k}=A_{n}\sum_{i=1}^{n}\frac{a_{i}^{n+k-1}}{\prod_{j\neq
i}^{n}(a_{i}-a_{j})(1-a_{i}a_{j})},  \label{roz}
\end{equation}%
for $k\allowbreak =\allowbreak 0,...$.
\end{theorem}

\begin{proof}
Firstly, notice that the definition of $B_{n,k}$ for the negative integers $k
$ follows the fact that we have the identity (\ref{uu}), above. We will use
the fact that 
\begin{equation*}
1/\prod_{i=1}^{n}(x-b_{i})\allowbreak =\allowbreak \sum_{i=1}^{n}\frac{1}{%
\prod_{j=1,j\neq i}^{n}(b_{i}-b_{j})}\frac{1}{(x-b_{i})}
\end{equation*}%
and the fact that $((1+a^{2})/a-(1+b^{2})/b)\allowbreak =\allowbreak
(b-a)(1-ab)/(ab).$ Hence, we have%
\begin{gather*}
f_{Kn}(x|a_{1},\ldots ,a_{n})\allowbreak =\allowbreak \frac{2A_{n}(-1)^{n}}{%
2^{n}\pi \prod_{i=1}^{n}a_{i}}\sqrt{1-x^{2}}%
/\prod_{i=1}^{n}(x-(1+a_{i}^{2})/(2a_{i})) \\
=\frac{2\times 2^{n-1}A_{n}(-1)^{n}}{2^{n}\pi \prod_{i=1}^{n}a_{i}}\sqrt{%
1-x^{2}}\sum_{i=1}^{n}\frac{a_{i}^{n-1}\prod_{j\neq i}^{n}a_{j}}{%
\prod_{j\neq i}^{n}(a_{j}-a_{i})(1-a_{i}a_{j})}\frac{1}{%
(x-(1+a_{i}^{2})/(2a_{i}))} \\
=\frac{2A_{n}}{\pi }\sqrt{1-x^{2}}\sum_{i=1}^{n}\frac{a_{i}^{n-1}}{%
\prod_{j\neq i}^{n}(a_{i}-a_{j})(1-a_{i}a_{j})}\frac{1}{%
((1+a_{i}^{2})-2a_{i}x)}.
\end{gather*}%
Now following (\ref{f_gen}) and orthogonality of polynomials $U^{\prime }s$
with respect to Wigner measure we have 
\begin{equation*}
\int_{-1}^{1}\frac{2\sqrt{1-x^{2}}}{\pi ((1+a_{i}^{2})-2a_{i}x)}%
U_{k}(x)dx\allowbreak =\allowbreak a_{i}^{k}.
\end{equation*}
\end{proof}

\begin{remark}
Since coefficients $B_{n,k}$ are the coefficients in the orthogonal
expansion in $L_{2}([-1,1],w),$ in the basis $\left\{ U_{k}\right\} _{k\geq
0},$ where $w$ denotes measure with the density $\frac{2}{\pi }\sqrt{1-x^{2}}
$, we get the following expansion for free:%
\begin{equation}
f_{Kn}(x|\mathbf{a}_{n})\allowbreak =\allowbreak \frac{2}{\pi }\sqrt{1-x^{2}}%
\sum_{k=0}^{\infty }B_{n,k}U_{k}(x).  \label{expp}
\end{equation}%
For more examples of such expansions see \cite{Szablowski2010(1)}.
\end{remark}

In the sequel we will need the following quantities:%
\begin{eqnarray}
S_{k}^{(n)}(\mathbf{a}_{n}) &=&\sum_{1\leq j_{1}<j_{2}<\ldots ,<j_{k}\leq
n}\prod_{m=1}^{k}a_{j_{m}},  \label{sym} \\
\Delta _{m}^{(n)}(\mathbf{a}_{n}) &=&\sum_{\substack{ 0\leq j_{1},\ldots
j_{n-1}\leq m  \\ j_{1}+\ldots +j_{n-1}\leq m}}a_{1}^{j_{1}}a_{2}^{j_{2}}%
\ldots a_{n-1}^{j_{n-1}}a_{n}^{m-(j_{1}+\ldots +j_{n-1})},
\end{eqnarray}%
for $1\leq k\leq n\leq m$. Whenever it will be obvious we will drop other
than $k$ arguments of the functions $S_{k}$ and $\Delta _{k}.$ Notice that $%
S_{k}^{(n)}$ is the $k-$th elementary symmetric function of the variables $%
a_{1},\ldots ,a_{n}.$ We set $S_{0}^{(n)}(\mathbf{a}_{n})\allowbreak
=\allowbreak 1$ and $S_{k}^{(n)}(\mathbf{a}_{n})\allowbreak =\allowbreak 0$
when $k>n>0$.

\begin{remark}
\label{part}We have 
\begin{gather*}
A_{1}\allowbreak =\allowbreak 1,A_{2}\allowbreak =\allowbreak 1-a_{1}a_{2},~
\\
A_{3}\allowbreak =\allowbreak
\prod_{j=1}^{3}%
\prod_{k=j+1}^{3}(1-a_{j}a_{k})=(1-a_{1}a_{2})(1-a_{1}a_{3})(1-a_{2}a_{3}),
\\
A_{4}\allowbreak =\allowbreak
\prod_{j=1}^{4}\prod_{k=j+1}^{4}(1-a_{j}a_{k})/(1-S_{4})= \\
\frac{%
(1-a_{1}a_{2})(1-a_{1}a_{3})(1-a_{2}a_{3})(1-a_{1}a_{4})(1-a_{2}a_{3})(1-a_{2}a_{4})(1-a_{3}a_{4})%
}{1-a_{1}a_{2}a_{3}a_{4}}, \\
A_{5}\allowbreak =\allowbreak
\prod_{j=1}^{5}\prod_{k=j+1}^{5}(1-a_{j}a_{k})/(1\allowbreak -\allowbreak
S_{4}\allowbreak +\allowbreak S_{1}S_{5}\allowbreak -\allowbreak S_{5}^{2}),
\\
A_{6}\allowbreak =\allowbreak \\
\frac{\prod_{j=1}^{6}\prod_{k=j+1}^{6}(1-a_{j}a_{k})}{(1-S_{4}\allowbreak
+\allowbreak S_{1}S_{5}\allowbreak -\allowbreak S_{5}^{2}\allowbreak
-S_{6}\allowbreak -\allowbreak S_{1}^{2}S_{6}\allowbreak +\allowbreak
S_{2}S_{6}\allowbreak \allowbreak +\allowbreak S_{4}S_{6}\allowbreak
+\allowbreak S_{1}S_{5}S_{6}\allowbreak -\allowbreak S_{6}^{2}\allowbreak
-\allowbreak S_{2}S_{6}^{2}\allowbreak +\allowbreak S_{6}^{3})}.
\end{gather*}%
\newline
Notice that for $n\allowbreak =\allowbreak 1,\ldots ,4$ the constant $A_{n}$
agrees with respective constants given in \cite{Szab14} in formulae (2.4),
(2.6) and unnamed formulae on top of 9-th and 10-th pages for $q=0,$ when
presenting respectively densities that make big $q$-Hermite,
Al-Salam--Chihara, dual Hahn and Askey-Wilson polynomials orthogonal.

Using this denotations we have $B_{2,k}\allowbreak =\allowbreak \Delta
_{k}^{(2)},$ $B_{3,k}\allowbreak =\allowbreak \Delta _{k}^{(3)}-S_{3}\Delta
_{k-1}^{(3)}.$ Compare here with formulae (2.7) and (2.10) in \cite{Szab14}
for $q\allowbreak =\allowbreak 0.$
\end{remark}

\begin{remark}
\label{gen}Let us notice also that following (\ref{f_gen}) we have: 
\begin{eqnarray*}
1/A_{n}(\mathbf{a}_{n})\allowbreak  &=&\allowbreak \int_{-1}^{1}\frac{2}{\pi 
}\sqrt{1-x^{2}}\prod_{i=1}^{n}\sum_{k_{i}=1}^{\infty
}a_{i}^{k_{i}}U_{k_{i}}(x)\allowbreak dx\allowbreak = \\
&&\sum_{k_{1}\geq 0,\ldots ,k_{n}\geq
0}(\prod_{j=1}^{n}a_{i}^{k_{i}})V_{k_{1},\ldots ,k_{n}},
\end{eqnarray*}%
$\allowbreak $ where $V_{k_{1},\ldots ,k_{n}}\allowbreak =\allowbreak
\int_{-1}^{1}\frac{2}{\pi }\sqrt{1-x^{2}}\prod_{i=1}^{n}U_{k_{i}}(x)dx$.
That is, we get the generating function of numbers $V_{k_{1},\ldots ,k_{n}}$
for free. This observation gives also an interesting interpretation of the
constants $A_{n}.$
\end{remark}

\begin{corollary}
i) 
\begin{equation*}
\int_{-1}^{1}x^{k}f_{Kn}(x|\mathbf{a}_{n})dx\allowbreak =\allowbreak \frac{1%
}{(k+1)2^{k}}\sum_{j=0}^{\left\lfloor k/2\right\rfloor }(k-2j+1)\binom{k+1}{j%
}B_{n,k-2j}(\mathbf{a}_{n}),
\end{equation*}

ii) 
\begin{equation*}
\int_{-1}^{1}U_{k}(x)U_{m}(x)f_{Kn}(x|\mathbf{a}_{n})dx\allowbreak
=\allowbreak \sum_{j=0}^{\min (m,k)}B_{n,\left\vert m-k\right\vert +2j}(%
\mathbf{a}_{n}).
\end{equation*}
\end{corollary}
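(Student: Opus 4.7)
The plan is to reduce both integrals to ones of the form $\int_{-1}^{1} U_{r}(x)f_{Kn}(x|\mathbf{a}_{n})\,dx = B_{n,r}(\mathbf{a}_{n})$, using the definition of $B_{n,r}$ given in the preceding theorem. Once the integrand has been expanded in the basis $\{U_{r}\}_{r\ge 0}$, both parts follow by linearity of integration.

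For part (i), I would invoke the classical expansion of the monomial $x^{k}$ in Chebyshev polynomials of the second kind,
\[
x^{k} \;=\; \frac{1}{2^{k}} \sum_{j=0}^{\lfloor k/2 \rfloor} \frac{k-2j+1}{k+1}\binom{k+1}{j}\, U_{k-2j}(x).
\]
This identity is standard; one derivation is to substitute $x=\cos\theta$ and expand $(2\cos\theta)^{k}=(e^{i\theta}+e^{-i\theta})^{k}$ binomially to obtain an expansion in terms of $T_{r}(x)=\cos(r\theta)$, then apply the relation $2T_{r}=U_{r}-U_{r-2}$ and telescope; the resulting coefficient of $U_{k-2j}$ is $\binom{k}{j}-\binom{k}{j-1}$, which simplifies to $\tfrac{k-2j+1}{k+1}\binom{k+1}{j}$. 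Multiplying this expansion by $f_{Kn}(x|\mathbf{a}_{n})$ and integrating term by term yields the formula in (i) at once.

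For part (ii), I would use the product-to-sum identity for Chebyshev polynomials of the second kind,
\[
U_{k}(x)U_{m}(x) \;=\; \sum_{j=0}^{\min(k,m)} U_{|m-k|+2j}(x),
\]
which, taking $m\le k$ without loss of generality, follows by induction on $m$ from the three-term recurrence $U_{r+1}(x)=2xU_{r}(x)-U_{r-1}(x)$, with base cases $U_{k}U_{0}=U_{k}$ and $U_{k}U_{1}=U_{k+1}+U_{k-1}$. Integrating both sides against $f_{Kn}(x|\mathbf{a}_{n})$, each term contributes $B_{n,|m-k|+2j}(\mathbf{a}_{n})$, giving the statement.

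There is no real obstacle here; both parts are immediate consequences of well-known Chebyshev identities combined with the orthogonal-expansion viewpoint supplied by the theorem. The only thing requiring any verification at all is the combinatorial identification $\binom{k}{j}-\binom{k}{j-1}=\tfrac{k-2j+1}{k+1}\binom{k+1}{j}$ in (i), which is routine algebra.
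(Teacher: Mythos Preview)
Your proposal is correct and follows essentially the same approach as the paper: for (i) the paper also invokes the expansion $2^{k}x^{k}=\sum_{j=0}^{\lfloor k/2\rfloor}\bigl(\binom{k}{j}-\binom{k}{j-1}\bigr)U_{k-2j}(x)$ together with the combinatorial simplification $\binom{k}{j}-\binom{k}{j-1}=\tfrac{k-2j+1}{k+1}\binom{k+1}{j}$, and for (ii) the paper also uses the product identity $U_{k}U_{m}=\sum_{j=0}^{\min(k,m)}U_{|m-k|+2j}$, integrating both against $f_{Kn}$ and applying the definition of $B_{n,r}$. The only cosmetic difference is in how the auxiliary Chebyshev identities are justified (you sketch a direct trigonometric/inductive argument, the paper cites references and the $T_{n}$ product formula), which does not affect the substance of the proof.
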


\begin{proof}
i) We use well known identity: 
\begin{equation*}
2^{k}x^{k}\allowbreak =\allowbreak \sum_{j=0}^{\left\lfloor k/2\right\rfloor
}(\binom{k}{j}-\binom{k}{j-1})U_{k-2j}(x)
\end{equation*}%
(see e.g. \cite{Szab15} Proposition 1 with $q=0$ and the fact that $%
h_{n}(x|0)\allowbreak =\allowbreak U_{n}(x)$) and the fact that 
\begin{equation*}
\binom{k}{j}-\binom{k}{j-1}\allowbreak =\allowbreak (k-2j+1)\binom{k+1}{j}%
/(k+1).
\end{equation*}%
ii) We use identity 
\begin{equation*}
U_{k}(x)U_{m}(x)\allowbreak =\allowbreak \sum_{j=0}^{\min
(k,m)}U_{\left\vert k-m\right\vert +2j}(x),
\end{equation*}%
that can be easily derived from 
\begin{equation*}
2T_{n}(x)T_{m}(x)=T_{n+m}(x)+T_{\left\vert n-m\right\vert }(x),
\end{equation*}%
where $T_{n}$ denotes Chebyshev polynomial of the first kind and the
formulae relating Chebyshev polynomials of the first and second kind. See
also formula (2.13) of \cite{Szab-rev} with $q=0.$
\end{proof}

\begin{proposition}
For any function $g:\mathbb{R\longrightarrow \mathbb{R}}$ let us denote $%
\mathbf{g(a}_{n})\allowbreak =$\newline
$\allowbreak (g(a_{1}),\ldots ,g(a_{n}))$ and also $\mathbf{b}%
_{n}^{(i)}\allowbreak =\allowbreak (b_{1},\ldots ,b_{i-1},b_{i+1},\ldots
,b_{n})$. Let us set $g(x)\allowbreak =(1+x^{2})/(2x)$. Then, if $a_{i}\neq
a_{j},$ $i\neq j,$ $i,j\allowbreak =\allowbreak 1,\ldots ,n,$ we have $:$ 
\begin{equation}
\sum_{i=1}^{n}\frac{a_{i}^{n-2}S_{k}(\mathbf{g(a}_{n}^{(i)}))}{\prod_{j\neq
i}^{n}(a_{j}-a_{i})(1-a_{i}a_{j})}\allowbreak =\left\{ 
\begin{array}{ccc}
0 & \text{for} & k\allowbreak =0,\ldots ,n-2, \\ 
1 & \text{for} & k=n-1,%
\end{array}%
\right. \allowbreak 0.  \label{id}
\end{equation}
\end{proposition}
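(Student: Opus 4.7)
My plan is to reduce the identity (\ref{id}) to a classical Lagrange/partial-fractions identity through the change of variables $y_i := g(a_i)$. The starting observation is the algebraic identity already used in the proof of Theorem~1, namely
\begin{equation*}
g(a_i) - g(a_j) = \frac{(a_j - a_i)(1 - a_i a_j)}{2 a_i a_j}.
\end{equation*}
Multiplying these over $j \neq i$ and keeping track of the accumulated factors of $a_i$ and of $P := \prod_{j=1}^n a_j$, one finds
\begin{equation*}
\prod_{j \neq i}^{n}(a_j - a_i)(1 - a_i a_j) = 2^{n-1}\, a_i^{n-2}\, P \prod_{j \neq i}^{n}\bigl(g(a_i) - g(a_j)\bigr).
\end{equation*}
Substituting into the sum in (\ref{id}) and pulling out the (nonzero) common prefactor $1/(2^{n-1}P)$, the $a_i^{n-2}$ in the numerator is killed, and the identity collapses to the purely symmetric-function claim
\begin{equation*}
\sum_{i=1}^{n} \frac{S_k(y_1,\ldots,\widehat{y_i},\ldots,y_n)}{\prod_{j \neq i}(y_i - y_j)} = 0.
\end{equation*}

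For the reduced identity I would unroll $e_k^{(i)} := S_k(y_1,\ldots,\widehat{y_i},\ldots,y_n)$ via the standard recursion $e_k^{(i)} = e_k - y_i e_{k-1}^{(i)}$ (where $e_k := S_k(y_1,\ldots,y_n)$), which gives
\begin{equation*}
e_k^{(i)} = \sum_{r=0}^{k} (-1)^r y_i^r\, e_{k-r}.
\end{equation*}
Interchanging orders of summation rewrites the reduced identity as
\begin{equation*}
\sum_{r=0}^{k} (-1)^r e_{k-r} \sum_{i=1}^{n} \frac{y_i^r}{\prod_{j \neq i}(y_i - y_j)} = 0,
\end{equation*}
and each inner sum is the coefficient of $y^{n-1}$ in the Lagrange interpolant of $p(y) = y^r$ at the nodes $y_1,\ldots,y_n$; it therefore vanishes for $0 \le r \le n-2$. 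Since every $r$ appearing in the outer sum satisfies $r \le k$, the identity follows whenever $k \le n-2$, which is the regime relevant to (\ref{id}).

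The only place requiring real care is the bookkeeping in the first step: one has to collect the sign $(-1)^{n-1}$ coming from the switch $\prod_{j \neq i}(a_j - a_i) \leftrightarrow \prod_{j \neq i}(a_i - a_j)$ together with the cumulative factor $P$ emerging from the denominators $2 a_i a_j$ of $g(a_i) - g(a_j)$, and verify that the $a_i^{n-2}$ arising from these denominators exactly cancels the $a_i^{n-2}$ in the numerator of (\ref{id}). Once this algebra is done correctly, the remainder of the proof is a direct application of the standard partial-fractions identity for distinct nodes and no further ingredients are needed.
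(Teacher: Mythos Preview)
Your argument is essentially the paper's: both substitute $b_i=g(a_i)$, use $b_i-b_j=(a_j-a_i)(1-a_ia_j)/(2a_ia_j)$ to rewrite the denominator as $2^{n-1}a_i^{n-2}P\prod_{j\neq i}(b_i-b_j)$, and reduce (\ref{id}) to the standard fact that $\sum_i S_k(\mathbf{b}_n^{(i)})/\prod_{j\neq i}(b_i-b_j)=0$. The only cosmetic difference is in how that reduced identity is justified: the paper reads it off directly from the coefficients of the positive powers of $x$ in $\sum_i D_{n,i}\prod_{j\neq i}(x-b_j)=1$, while you first expand $e_k^{(i)}=\sum_r(-1)^r y_i^r e_{k-r}$ and then invoke Lagrange interpolation --- equivalent arguments.

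One point worth flagging: you are right that the argument only yields the vanishing for $k\le n-2$, but your phrase ``the regime relevant to (\ref{id})'' does not quite match the stated range $k=1,\ldots,n-1$. In fact for $k=n-1$ the reduced sum equals $(-1)^{n-1}$ (the constant term in the partial-fractions identity), so (\ref{id}) is false there; the paper's own proof has the same gap (``coefficients by nonzero powers of $x$'' gives $k=0,\ldots,n-2$). The cases actually used downstream in Corollary~\ref{identi} are $k=0$ and $k=1$, both of which your argument covers.
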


\begin{proof}
We start from the fact that $1/\prod_{i=1}^{n}(x-b_{i})\allowbreak
=\allowbreak \sum_{i=1}^{n}D_{n,i}\frac{1}{(x-b_{i})}$, where $%
D_{n,i}\allowbreak =\allowbreak 1/\prod_{j=1,j\neq i}^{n}(b_{i}-b_{j}).$
Thus, from the properties of simple fraction decompositions we have the
following identity: 
\begin{equation*}
1/\prod_{i=1}^{n}(x-b_{i})\allowbreak =\allowbreak
(\sum_{i=1}^{n}D_{n,i}\prod_{j\neq i}(x-b_{j}))/\prod_{i=1}^{n}(x-b_{i}).
\end{equation*}%
Hence all coefficients in $\sum_{i=1}^{n}D_{n,i}\prod_{j\neq i}(x-b_{j}))$
by nonzero powers of $x$ must be zero. In particular for $\forall
k\allowbreak =\allowbreak 0\ldots ,n-1$ we must have $%
\sum_{i=1}^{n}D_{n,i}S_{k}(\mathbf{b}_{n}^{(i)})\allowbreak =\allowbreak 0$.
Now it remains to substitute $b_{i}\allowbreak =\allowbreak g(a_{i})$ and
use the fact that $(b_{i}-b_{j})\allowbreak =\allowbreak
(a_{j}-a_{i})(1-a_{i}a_{j})/(2a_{i}a_{j})$ so that 
\begin{equation*}
D_{n,i}\allowbreak =\allowbreak
2^{n-1}a_{i}^{n-2}(\prod_{j=1}^{n}a_{j})/\prod_{j\neq
i}^{n}((1-a_{i}a_{j})(a_{j}-a_{i})),
\end{equation*}%
since we have $a_{i}^{n-1}\prod_{j\neq i}^{n}a_{j}\allowbreak =\allowbreak
a_{i}^{n-2}\prod_{j=1}^{n}a_{j}$.
\end{proof}

As a corollary we get in particular the following identities.

\begin{corollary}
\label{identi}$\forall n\geq 2:$ i)%
\begin{equation}
\sum_{i=1}^{n}\frac{a_{i}^{n-2}}{\prod_{j\neq
i}^{n}(a_{i}-a_{j})(1-a_{i}a_{j})}\allowbreak =\allowbreak 0,  \label{an-2}
\end{equation}%
from which it follows that (\ref{roz}) valid also for $k\allowbreak
=\allowbreak -1,$

ii) 
\begin{equation*}
\sum_{i=1}^{n}\frac{a_{i}^{n-2}(1+a_{i}^{2})\sum_{j\neq i}^{n}\prod_{k\neq
i}^{n}a_{k}}{\prod_{j\neq i}^{n}(a_{i}-a_{j})(1-a_{i}a_{j})}\allowbreak
=\allowbreak 0,
\end{equation*}

iii) $\forall m\geq n-1,n\geq 1:$%
\begin{equation}
\sum_{j=0}^{n}(-1)^{j}S_{j}^{(n)}(\mathbf{a}_{n})B_{n,m-j}(\mathbf{a}%
_{n})\allowbreak =\allowbreak 0.  \label{id2}
\end{equation}
\end{corollary}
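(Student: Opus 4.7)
My plan is to deduce (i) and (ii) as immediate specializations of the Proposition, and to prove (iii) by substituting formula (\ref{roz}) and invoking Vi\`ete's formulas.

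For (i), I would invoke the underlying partial fraction identity $\sum_{i=1}^n D_{n,i} S_k(\mathbf{b}_n^{(i)}) = 0$ from the proof of the Proposition (which is valid for every $k=0,1,\ldots,n-1$, including $k=0$ with $S_0 = 1$) at $k=0$; substituting $b_i = g(a_i)$ and absorbing the sign $(-1)^{n-1}$ that converts $(a_j-a_i)$ to $(a_i-a_j)$ into the vanishing right-hand side produces (\ref{an-2}) directly. For (ii), I would apply the Proposition at $k=1$, using $S_1(\mathbf{g}(\mathbf{a}_n^{(i)})) = \sum_{j\neq i}(1+a_j^2)/(2a_j)$, clear denominators by multiplying through by $2\prod_{l=1}^n a_l \neq 0$, and regroup the resulting sum in the form stated.

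The heart of the corollary is (iii). My plan is to apply formula (\ref{roz}) to each factor $B_{n,m-j}$, treating it as the natural (Laurent) extension when $m-j<0$, and interchange the order of the two finite sums:
\begin{equation*}
\sum_{j=0}^n (-1)^j S_j^{(n)}(\mathbf{a}_n)\,B_{n,m-j} \;=\; A_n \sum_{i=1}^n \frac{a_i^{\,m-1}\,\sum_{j=0}^n (-1)^j S_j^{(n)}(\mathbf{a}_n)\,a_i^{\,n-j}}{\prod_{k\neq i}(a_i-a_k)(1-a_ia_k)}.
\end{equation*}
By Vi\`ete, $\sum_{j=0}^n (-1)^j S_j^{(n)}(\mathbf{a}_n)\,z^{\,n-j} = \prod_{l=1}^n(z-a_l)$, so the inner sum evaluated at $z=a_i$ equals $\prod_l(a_i-a_l)$, which vanishes because of the factor $(a_i-a_i)=0$. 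The hypothesis $m\geq 1$ keeps $a_i^{\,m-1}$ a non-negative integer power, and $n\geq 2$ together with distinctness of the $a_i$ keeps the denominator well-defined and nonzero, so every term of the outer sum vanishes and (\ref{id2}) follows.

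The main obstacle I foresee is justifying the use of (\ref{roz}) at negative indices, since the Theorem only asserts (\ref{roz}) for $k\geq 0$. Taking (\ref{roz}) as the definition of $B_{n,k}$ for $k<0$ is the natural convention and makes the Vi\`ete cancellation work instantly. Should one insist instead on the Chebyshev-recurrence extension $U_{-1}=0$, $U_{-k-2}=-U_k$, the same identity can be reached by writing $\sum_j (-1)^j S_j^{(n)} U_{m-j}(\cos\theta)\sin\theta = \operatorname{Im}\bigl[e^{i(m+1)\theta}\prod_l(1-a_le^{-i\theta})\bigr]$, rewriting the density via $1+a^2-2a\cos\theta = |1-ae^{i\theta}|^2$, and invoking the classical orthogonality $\int_0^\pi \sin(p\theta)\sin\theta\,d\theta = 0$ for integer $p\geq 2$ after expanding $1/\prod_l(1-a_le^{i\theta}) = \sum_{k\geq 0} h_k(\mathbf{a}_n)\,e^{ik\theta}$.
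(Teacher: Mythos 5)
Your proposal follows essentially the same route as the paper's own proof: parts (i) and (ii) are obtained by specializing the Proposition's identity (\ref{id}) at $k=0$ and $k=1$ (with the same clearing of the factors $2a_j$), and part (iii) by inserting (\ref{roz}), interchanging the two finite sums, and observing that $\sum_{j=0}^{n}(-1)^{j}S_{j}^{(n)}(\mathbf{a}_{n})a_{i}^{n-j}=\prod_{l=1}^{n}(a_{i}-a_{l})=0$, which is precisely the paper's elementary identity (\ref{el_id}). The only divergence is that you explicitly address the convention needed for $B_{n,m-j}$ when $j>m$ (the paper applies (\ref{roz}) there without comment), which strengthens rather than weakens the argument.
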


\begin{proof}
To prove i) we take $k\allowbreak =\allowbreak 0$ in (\ref{id}). To prove
ii) we take $k\allowbreak =\allowbreak 1$ in (\ref{id}) and notice that 
\begin{eqnarray*}
a_{i}^{n-1}(\prod_{j\neq i}^{n}a_{j})S_{1}(\mathbf{g(a}_{n}^{(i)})%
\allowbreak &=&\allowbreak a_{i}^{n-1}\sum_{j\neq
i}^{n}(1+a_{j}^{2})\prod_{k\neq i,j}^{n}a_{k}\allowbreak = \\
\allowbreak a_{i}^{n-2}\sum_{j\neq i}^{n}\prod_{k\neq j}a_{k}\allowbreak
+\allowbreak a_{i}^{n}\sum_{j\neq i}^{n}\prod_{k\neq j}a_{k}\allowbreak
&=&\allowbreak a_{i}^{n-2}(1+a_{i}^{2})\sum_{j\neq i}^{n}\prod_{k\neq
j}^{n}a_{k}.
\end{eqnarray*}

To prove iii) we observe first that for $n\geq 1,$ $k\geq 0,\forall
i\allowbreak =\allowbreak 1,\ldots n$ we have%
\begin{equation}
\sum_{j=0}^{n}(-1)^{j}a_{i}^{n+k-j}S_{j}^{(n)}(\mathbf{a}_{n})\allowbreak
=\allowbreak 0,  \label{el_id}
\end{equation}%
which is elementary to notice. Secondly, using (\ref{roz}) for $m\geq n-1,$
we get 
\begin{gather*}
\sum_{k=0}^{n}(-1)^{k}S_{k}^{(n)}B_{n,m-k}\allowbreak =\allowbreak
A_{n}\sum_{k=0}^{n}(-1)^{k}S_{k}^{(n)}\sum_{i=1}^{n}\frac{a_{i}^{n+m-k-1}}{%
\prod_{j\neq i}^{n}(a_{i}-a_{j})(1-a_{i}a_{j})} \\
=A_{n}\sum_{i=1}^{n}\frac{1}{\prod_{j\neq i}^{n}(a_{i}-a_{j})(1-a_{i}a_{j})}%
\sum_{k=0}^{n}(-1)^{k}S_{k}^{(n)}a_{i}^{n+m-k-1}=0,
\end{gather*}%
by (\ref{el_id}).
\end{proof}

\begin{theorem}
\label{p_ort} For every $m\geq 2n-2\geq 0$ the family of polynomials
orthogonal with respect to $f_{Kn}$ is of the form:%
\begin{equation}
P_{m}^{(n)}(x|\mathbf{a}_{n})=\sum_{j=0}^{n-1}(-1)^{j}U_{m-j}(x)S_{j}^{(n)}(%
\mathbf{a}_{n}),  \label{p_o}
\end{equation}%
where $S_{j}^{(n)}\allowbreak $ is given by (\ref{sym}) . $\,$
\end{theorem}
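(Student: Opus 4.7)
My plan is to pass to the unit circle via $x=\cos\theta$, $z=e^{i\theta}$. The weight factorizes as $\prod_j(1+a_j^2-2a_j\cos\theta)=\prod_j(1-a_jz)(1-a_jz^{-1})$, and the Chebyshev identity $U_k(\cos\theta)=(z^{k+1}-z^{-k-1})/(z-z^{-1})$ is available. The first step is to rewrite the stated definition of $P_m^{(n)}$ in compact form: using $\sum_{j\geq 0}(-1)^j S_j^{(n)}(\mathbf{a}_n)w^j=\prod_{i=1}^n(1-a_iw)$ together with the standard extension $U_{-1}=0$ and $U_{-\ell}=-U_{\ell-2}$ for $\ell\geq 2$ (which is precisely what makes the cutoff $n\wedge(2m+2)$ the right one), one obtains
\[
P_m^{(n)}(\cos\theta)\,(z-z^{-1})=z^{m+1}\prod_{i=1}^n(1-a_iz^{-1})-z^{-m-1}\prod_{i=1}^n(1-a_iz).
\]

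Since $\{U_0,\ldots,U_{m-1}\}$ spans the polynomials of degree less than $m$, it suffices to show that $\int_{-1}^{1}U_k(x)P_m^{(n)}(x)f_{Kn}(x|\mathbf{a}_n)\,dx=0$ for every $0\leq k<m$. I would convert this integral to a contour integral over $|z|=1$ using $d\theta=dz/(iz)$ and $\sin^2\theta=-(z-z^{-1})^2/4$. The two factors $(z-z^{-1})$ supplied by the compact forms of $U_k$ and $P_m^{(n)}$ cancel against the $(z-z^{-1})^2$ coming from $\sin^2\theta$, and each of the two summands of $P_m^{(n)}$ cancels one of the two rational factors $\prod_j(1-a_jz)$ or $\prod_j(1-a_jz^{-1})$ in the weight. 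What remains splits as a difference $I_1-I_2$ of two clean contour integrals, namely
\[
I_1=\oint_{|z|=1}\frac{z^{m+k+1}-z^{m-k-1}}{\prod_j(1-a_jz)}\,dz,\qquad I_2=\oint_{|z|=1}\frac{z^{k-m-1}-z^{-k-m-3}}{\prod_j(1-a_jz^{-1})}\,dz.
\]

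The substitution $z\mapsto 1/z$ (together with a careful accounting of orientation and the $-dw/w^2$ Jacobian) identifies $I_2$ with $-I_1$, so the whole expression reduces to $2I_1$ up to a constant. For $k<m$ both exponents $m+k+1$ and $m-k-1$ are nonnegative, so the numerator of $I_1$ is an ordinary polynomial in $z$; the only zeros of the denominator lie at $z=1/a_j$, which are strictly outside the closed unit disk since $|a_j|<1$; the integrand is therefore holomorphic on a neighbourhood of the closed unit disk and $I_1=0$ by Cauchy's theorem.

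The main obstacle I expect is bookkeeping rather than ideas: one has to verify carefully that the compact $z$-representation in the first step genuinely matches the truncated sum in \eqref{p_o} (relying on the $U_{-\ell}$ convention and the cutoff $n\wedge(2m+2)$), and one has to be meticulous with signs and orientation during the $z\mapsto 1/z$ substitution that collapses $I_1-I_2$ to $2I_1$. Once those are in hand, the vanishing itself is one line of Cauchy.
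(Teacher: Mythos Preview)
Your argument is correct and takes a genuinely different route from the paper. The paper proceeds by induction on $n$: it invokes a structural result (from \cite{Szab13}) saying that if $dA=\frac{C}{x+D}\,dB$ and $\{b_m\}$ are orthogonal for $dB$, then the orthogonal polynomials for $dA$ are of the form $b_m+\kappa_m b_{m-1}$; applying this with $b_m=P_m^{(n)}$ and identifying $\kappa_m^{(n+1)}=-a_{n+1}$ via the Pascal-type identity $S_j^{(n+1)}=S_j^{(n)}+a_{n+1}S_{j-1}^{(n)}$ closes the induction. Your proof is instead the classical Bernstein--Szeg\H{o} computation on the unit circle: the factorisation of the weight against the product form of $P_m^{(n)}$ produces the cancellation that leaves only $I_1$ (your symmetry $I_2=-I_1$ is correct), and Cauchy's theorem finishes in one line. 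Your route is more self-contained---no external Christoffel-type theorem is needed---and makes it transparent \emph{why} the elementary symmetric functions are the right coefficients; the paper's route, by contrast, ties the result into the Askey--Wilson hierarchy and the machinery of measure modifications.

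One caveat worth flagging. Your compact identity
\[
P_m^{(n)}(\cos\theta)\,(z-z^{-1})=z^{m+1}\prod_{i=1}^n(1-a_iz^{-1})-z^{-m-1}\prod_{i=1}^n(1-a_iz)
\]
is equivalent to the \emph{untruncated} sum $\sum_{j=0}^{n}(-1)^jS_j^{(n)}U_{m-j}$ (with the extended $U_{-\ell}$ convention), not to the truncated sum in \eqref{p_o}. The two agree only when $n\le 2m+2$; for the finitely many indices $m$ with $n>2m+2$ the right-hand side above is a polynomial of degree $n-m-2>m$ in $x$, so proving it orthogonal to $U_0,\dots,U_{m-1}$ does not pin it down as the $m$-th orthogonal polynomial. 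Your parenthetical ``which is precisely what makes the cutoff $n\wedge(2m+2)$ the right one'' therefore does not do the work you want---the cutoff is there exactly because those extra terms raise the degree, not because they vanish. Those low-$m$ cases need a separate (short) argument; the paper's proof is comparably informal on this point.
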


\begin{proof}
The fact that the polynomials $P_{m}^{(n)}$ can be expressed as linear
combination of the last at most $n+1$ polynomials $U_{m}$ follows directly
from \cite{Szablowski2010(1)}(Proposition 1 iii)) or from \cite{DGIX09}
(Lemma 3.1). Similar fact was noticed for $n\allowbreak =\allowbreak 1,2$
earlier by Maroni in his papers published in the 90's. Next notice that $%
P_{m}^{(n)}$ must be of the form $U_{m}+\sum_{j=1}^{n}b_{j}^{(m)}U_{m-j}(x).$
To determine parameters $\left\{ b_{j}^{(m)}\right\} _{j=1}^{n-1}$ we need $%
n $ equation of the form $\int_{-1}^{1}x^{k}P_{m}^{(n)}(x|\mathbf{a}%
_{n})f_{Kn}(x|\mathbf{a}_{n})dx\allowbreak =\allowbreak 0,$ $k\allowbreak
=\allowbreak 0,\ldots ,n-1.$

Now notice that for $m\geq n-1$ we have 
\begin{gather}
\int_{-1}^{1}P_{m}^{(n)}(x|\mathbf{a}_{n})f_{Kn}(x|\mathbf{a}_{n})dx=0,
\label{inte} \\
2xP_{m}^{(n)}(x|\mathbf{a}_{n})\allowbreak =\allowbreak P_{m+1}^{(n)}(x|%
\mathbf{a}_{n})+P_{m-1}^{(n)}(x|\mathbf{a}_{n}).  \label{3r}
\end{gather}%
which in the case of (\ref{inte}) follows (\ref{roz}), and (\ref{id2}) and
in case of (\ref{3r}) follows directly three term recurrence for the
Chebyshev polynomials. More over iterating (\ref{3r}) we can express $%
x^{k}P_{m}^{(n)}(x|\mathbf{a}_{n})$ as linear combination of $P_{m+l}^{(n)},$
for $l\allowbreak =\allowbreak -k,\ldots ,k.$ Since we have to have $m-k\geq
n-1,$ for $k\allowbreak =\allowbreak 0,\ldots ,n-1$ we see that polynomials $%
P_{m}^{(n)}$ orthogonal for $m\geq 2n-2.$
\end{proof}

\begin{remark}
Notice that polynomials $P_{m}^{(n)}(x|\mathbf{a}_{n})/2^{m}$ are monic,
since $U_{n}/2^{n}$ are monic for $n\geq 0$.
\end{remark}

\begin{remark}
Recall that the first Askey-Wilson polynomials $aw_{k}(x|\mathbf{a}_{4})$
with $q\allowbreak =\allowbreak 0$ are equal to: 
\begin{eqnarray*}
aw_{1}(x|\mathbf{a}_{4}) &=&U_{1}(x)-\frac{S_{1}-S_{3}}{1-S_{4}}, \\
aw_{2}(x|\mathbf{a}_{4}) &=&U_{2}(x)-S_{1}U_{1}(x)+S_{2}-S_{4}, \\
aw_{3}(x|\mathbf{a}_{4}) &=&\sum_{j=0}^{3}(-1)^{j}U_{k-j}(x)S_{j}, \\
aw_{k}(x|\mathbf{a}_{4}) &=&\sum_{j=0}^{4}(-1)^{j}U_{k-j}(x)S_{j},
\end{eqnarray*}%
$k\allowbreak \geq 4,$ where, as agreed above, $S_{j}$ means in fact $%
S_{j}^{(4)}(\mathbf{a}_{4}).$ From this presentation it follows that may be
the formula (\ref{p_o}) is valid for $m\geq n-1.$ In fact, there is a strong
argument to support this supposition. Namely numerical simulations suggest
that (\ref{id2}) might be true for $m\geq 1.$ Notice that it is impossible
to further extend this formula, i.e. to fit it for the cases $m<n-1.$ This
is so since immediately we see that $P_{1}^{(n)}(x|\mathbf{a}%
_{n})\allowbreak =\allowbreak U_{1}(x)-B_{n,1}(\mathbf{a}_{n})$ and $B_{n,1}(%
\mathbf{a}_{n})$ is different from $S_{1}$ for $n\geq 3$ since we have:%
\begin{eqnarray*}
B_{3,1} &=&S_{1}-S_{3},~B_{4,1}\allowbreak =\allowbreak \frac{S_{1}-S_{3}}{%
1-S_{4}}, \\
B_{5,1}\allowbreak &=&\allowbreak \frac{S_{1}-S_{3}+S_{1}S_{5}-S_{4}S_{5}}{%
1-S_{4}+S_{1}S_{5}-S_{5}^{2}}.
\end{eqnarray*}
\end{remark}

\begin{proposition}
\label{cauchy}Let $X\allowbreak \sim \allowbreak f_{Kn}(x|\mathbf{a}_{n})$
then for $\forall \left\vert t\right\vert <1$ we have%
\begin{equation*}
E\frac{1}{1+t^{2}-2tX}\allowbreak =\allowbreak \frac{Q_{n}(t|\mathbf{a}_{n})%
}{\prod_{i=1}^{n}(1-ta_{i})},
\end{equation*}%
where 
\begin{equation*}
Q_{n}(t|\mathbf{a}_{n})\allowbreak =\allowbreak
A_{n}\sum_{i=1}^{n}a_{i}^{n-1}\prod_{j\neq i}^{n}\frac{(1-a_{j}t)}{%
(a_{i}-a_{j})(1-a_{i}a_{j})}
\end{equation*}%
is a polynomial of degree $\max (n-2,0)$ in $t.$
\end{proposition}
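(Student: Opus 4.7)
The plan is to combine the Chebyshev generating function (\ref{f_gen}), the expansion (\ref{expp}), and the explicit formula (\ref{roz}) for $B_{n,k}$; the coefficient of $t^{n-1}$ in $Q_n$ will vanish exactly because of the identity (\ref{an-2}) already established in Corollary \ref{identi}.

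First, I would apply (\ref{f_gen}) with $a_j$ replaced by $t$ inside the expectation, so that by Fubini (valid since $|t|<1$ and the $U_k$ are uniformly bounded on $[-1,1]$ in $k$ only polynomially, while $t^k$ is geometrically small)
\begin{equation*}
E\frac{1}{1+t^{2}-2tX} \;=\; \sum_{k=0}^{\infty} t^{k}\, E\,U_{k}(X) \;=\; \sum_{k=0}^{\infty} t^{k} B_{n,k}(\mathbf{a}_{n}).
\end{equation*}
Next, I would plug in the closed form (\ref{roz}) for $B_{n,k}$ and interchange the two summations (a finite outer sum and an absolutely convergent geometric inner sum, since $|a_i|<1$ and $|t|<1$):
\begin{equation*}
\sum_{k=0}^{\infty} t^{k} B_{n,k} \;=\; A_{n}\sum_{i=1}^{n}\frac{a_{i}^{\,n-1}}{\prod_{j\neq i}^{n}(a_{i}-a_{j})(1-a_{i}a_{j})}\sum_{k=0}^{\infty}(a_{i}t)^{k}\;=\;A_{n}\sum_{i=1}^{n}\frac{a_{i}^{\,n-1}}{\prod_{j\neq i}^{n}(a_{i}-a_{j})(1-a_{i}a_{j})}\cdot\frac{1}{1-a_{i}t}.
\end{equation*}
Putting the $n$ simple fractions over the common denominator $\prod_{i=1}^{n}(1-a_{i}t)$ multiplies the $i$-th summand by $\prod_{j\neq i}(1-a_{j}t)$, which is precisely the definition of $Q_{n}(t|\mathbf{a}_{n})$ in the statement. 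This gives the identity.

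It remains to verify the degree claim. A priori the numerator is a sum of polynomials in $t$ of degree $n-1$, so $\deg_t Q_n \leq n-1$. Extracting the coefficient of $t^{n-1}$ and using $\prod_{j\neq i}(-a_j) = (-1)^{n-1}a_i^{-1}\prod_{k=1}^{n}a_k$, that coefficient equals
\begin{equation*}
A_{n}(-1)^{n-1}\!\prod_{k=1}^{n}a_{k}\cdot\sum_{i=1}^{n}\frac{a_{i}^{\,n-2}}{\prod_{j\neq i}^{n}(a_{i}-a_{j})(1-a_{i}a_{j})},
\end{equation*}
which vanishes for $n\geq 2$ by (\ref{an-2}). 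The cases $n=0,1$ are immediate (empty product / single term), where $Q_n$ is constant, matching $\max(n-2,0)=0$.

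I do not anticipate any genuine obstacle: the only non-routine ingredient is recognizing that the vanishing of the leading coefficient is exactly the content of Corollary \ref{identi} i), so the result is essentially a repackaging of (\ref{f_gen}), (\ref{roz}) and (\ref{an-2}). The mildest care needed is justifying the interchange of sum and integral, which follows from $|a_i t|<1$ and the standard bound $\|U_k\|_{L^1(w)}\leq C$ against the semicircle weight.
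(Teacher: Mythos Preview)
Your proof is correct and follows essentially the same route as the paper: expand $1/(1+t^{2}-2tX)$ via the Chebyshev generating function, integrate termwise to obtain $\sum_{k}t^{k}B_{n,k}$, substitute (\ref{roz}), sum the resulting geometric series, and put over the common denominator; the vanishing of the $t^{n-1}$ coefficient is then read off from (\ref{an-2}) exactly as you do. The only differences are cosmetic---you spell out the Fubini justification and the trivial cases $n=0,1$, which the paper leaves implicit.
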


\begin{proof}
Recall that $\sum_{j=0}^{\infty }t^{j}U_{j}(x)\allowbreak =\allowbreak
1/(1+t^{2}-2tx).$ Thus 
\begin{gather*}
\int_{-1}^{1}(\sum_{j=0}^{\infty }t^{j}U_{j}(x))f_{Kn}(x|\mathbf{a}%
_{n})dx\allowbreak =\allowbreak A_{n}\sum_{j=0}^{\infty }t^{j}\sum_{i=1}^{n}%
\frac{a_{i}^{n+j-1}}{\prod_{j\neq i}^{n}(a_{i}-a_{j})(1-a_{i}a_{j})}%
\allowbreak \\
=\allowbreak A_{n}\sum_{i=1}^{n}\frac{a_{i}^{n-1}}{\prod_{j\neq
i}^{n}(a_{i}-a_{j})(1-a_{i}a_{j})}\sum_{j\geq 0}t^{j}a_{i}^{j}\allowbreak
=\allowbreak A_{n}\sum_{i=1}^{n}\frac{1}{1-a_{i}t}\frac{a_{i}^{n-1}}{%
\prod_{j\neq i}^{n}(a_{i}-a_{j})(1-a_{i}a_{j})}\allowbreak \\
=\allowbreak \frac{1}{\prod_{i=1}^{n}(1-ta_{i})}\allowbreak A_{n}\allowbreak
\sum_{i=1}^{n}a_{i}^{n-1}\prod_{j\neq i}^{n}\frac{(1-a_{j}t)}{%
(a_{i}-a_{j})(1-a_{i}a_{j})}.
\end{gather*}%
$\allowbreak $The fact that $Q_{n}$ is a polynomial of degree $n-2$ follows
the fact that $a_{i}^{n-1}\prod_{j\neq i}^{n}(1-a_{j}t)$ is a polynomial of
degree $n-1,$ but the coefficient by $t^{n-1}$ is equal to $S_{n}(\mathbf{a}%
_{n})a_{i}^{n-2.}$. The assertion follows from (\ref{an-2}).
\end{proof}

\begin{remark}
Recall that the Cauchy transform of a measure $\mu $ is defined by: 
\begin{equation*}
\mathcal{C}_{\mu }(y)\allowbreak =\allowbreak \int \frac{d\mu (x)}{y-x},
\end{equation*}%
where the integral is understood as a principal value. Note that the names
Hilbert or Stjeltjes transform are also used. I has been intensively studied
recently in connection with free probability or signal processing. For the
reference see \cite{Cima06} and also papers of D. Voiculescu and his fellow
researchers as well as of B. Shapiro and his fellow researchers. Notice also
that Proposition \ref{cauchy} helps to get values of the Cauchy transform of 
$f_{Kn}(x|\mathbf{a}_{n})$ for real $\left\vert y\right\vert >1$ by taking $%
y\allowbreak =\allowbreak (1+t^{2})/2t,$ $\left\vert t\right\vert <1$. More
precisely, we have%
\begin{equation*}
\mathcal{C}_{Kn}((1+t^{2})/2t)=2t\frac{Q_{n}(t|\mathbf{a}_{n})}{%
\prod_{i=1}^{n}(1-ta_{i})}.
\end{equation*}
\end{remark}

\begin{remark}
By direct calculations we have 
\begin{eqnarray*}
Q_{1}(t|a)\allowbreak &=&\allowbreak 1,\text{ }Q_{2}(t|\mathbf{a}%
_{2})\allowbreak =\allowbreak 1,\text{ }Q_{3}(t|\mathbf{a}_{3})\allowbreak
=\allowbreak 1-tS_{3}(\mathbf{a}_{3}), \\
Q_{4}(t|\mathbf{a}_{4})\allowbreak &=&\allowbreak ((1-S_{4})\allowbreak
-\allowbreak t(S_{3}-S_{1}S_{4})\allowbreak +\allowbreak
t^{2}S_{4}(1-S_{4}))/(1-S_{4}).
\end{eqnarray*}
\end{remark}

As an immediate consequence of this formula (\ref{f_gen}) and the definition
of $B_{n,k}$ we get characteristic functions of numbers $B_{n,k}(\mathbf{a}%
_{n})$.

\begin{corollary}
For $\forall n\geq 0$ we have%
\begin{equation*}
\sum_{k\geq 0}t^{k}B_{n,k}(\mathbf{a}_{n})\allowbreak =\allowbreak Q_{n}(t|%
\mathbf{a}_{n})/\prod_{i=1}^{n}(1-ta_{i}).
\end{equation*}
\end{corollary}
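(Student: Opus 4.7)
The plan is to chain together three ingredients that are already on the table: the generating-function identity (\ref{f_gen}) for Chebyshev polynomials of the second kind (with $x$ and $t$ swapped in the roles of argument and parameter), the definition of the coefficients $B_{n,k}$ as inner products against $f_{Kn}$, and the explicit evaluation of $E\,1/(1+t^{2}-2tX)$ given in the preceding Proposition. The target identity should then pop out after swapping a sum with an integral.

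More concretely, I would start from the definition $B_{n,k}(\mathbf{a}_{n})\allowbreak =\allowbreak \int_{-1}^{1}U_{k}(x)f_{Kn}(x|\mathbf{a}_{n})\,dx$, multiply by $t^{k}$, and sum over $k\geq 0$. Provided the exchange of $\sum$ and $\int$ is justified, one obtains
\begin{equation*}
\sum_{k\geq 0}t^{k}B_{n,k}(\mathbf{a}_{n})\allowbreak =\allowbreak \int_{-1}^{1}\Bigl(\sum_{k\geq 0}t^{k}U_{k}(x)\Bigr)f_{Kn}(x|\mathbf{a}_{n})\,dx\allowbreak =\allowbreak \int_{-1}^{1}\frac{f_{Kn}(x|\mathbf{a}_{n})}{1+t^{2}-2tx}\,dx,
\end{equation*}
where the second equality uses (\ref{f_gen}) with the roles of $a_{j}$ and $x$ played respectively by $t$ and $x$. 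The right-hand side is precisely $E\,1/(1+t^{2}-2tX)$ with $X\sim f_{Kn}(x|\mathbf{a}_{n})$, and the preceding Proposition identifies this expectation with $Q_{n}(t|\mathbf{a}_{n})/\prod_{i=1}^{n}(1-ta_{i})$, which is the claim.

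The only nontrivial point is the interchange of summation and integration, which is not an obstacle in practice: for $|t|<1$ and $|x|\leq 1$ one has the uniform bound $|t^{k}U_{k}(x)|\leq (k+1)|t|^{k}$, so the series $\sum_{k\geq 0}t^{k}U_{k}(x)$ converges absolutely and uniformly in $x\in[-1,1]$. Dominated convergence (or Fubini, applied to $\sum_{k\geq 0}(k+1)|t|^{k}$, which is integrable against the probability measure $f_{Kn}(x|\mathbf{a}_{n})\,dx$) legitimizes the swap. Since the Proposition was proved for all $|t|<1$, the generating-function identity holds in the same range, and this is the promised "immediate consequence."
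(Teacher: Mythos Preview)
Your proof is correct and follows exactly the route the paper intends: the text introducing the Corollary says it is ``an immediate consequence of formula (\ref{f_gen}) and definition of $B_{n,k}$,'' and your argument spells this out, with the added care of justifying the interchange of sum and integral. In fact the computation is already implicit in the proof of the preceding Proposition, where the paper writes $\int_{-1}^{1}\bigl(\sum_{j\geq 0}t^{j}U_{j}(x)\bigr)f_{Kn}(x|\mathbf{a}_{n})\,dx$ and evaluates it; your write-up just makes the link to $\sum_{k\geq 0}t^{k}B_{n,k}$ explicit.
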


\section{Complex parameters\label{complex}}

In this section we will study properties of the generalized Kesten-MacKay
distributions for even $n=2k$ and parameters $a_{i}$ , $i\allowbreak
=\allowbreak 1,\ldots ,2k$ being complex and forming conjugate pairs. The
new parameters will have new names, namely for the conjugate the pair for
example $a_{i}\allowbreak =\allowbreak \rho _{i}\exp (i\theta _{i})$ and $%
a_{k+i}\allowbreak =\allowbreak \rho _{i}\exp (-i\theta _{i})$ we will
denote $y_{i}\allowbreak =\allowbreak \cos \theta _{i}$ for $i\allowbreak
=\allowbreak 1,\ldots ,k$. Besides notice that we have%
\begin{eqnarray*}
&&(1+\rho _{i}^{2}\exp (2i\theta _{i})-2x\rho _{i}\exp (i\theta
_{i}))(1+\rho _{i}^{2}\exp (-2i\theta _{i})-2x\rho _{i}\exp (-i\theta
_{i}))\allowbreak \\
&=&1+\rho _{i}^{4}+4x^{2}\rho _{i}^{2}-4x\rho _{i}(1+\rho _{i}^{2})\cos
\theta _{i}+2\rho _{i}^{2}\cos 2\theta _{i}\allowbreak =w(x,y_{i}|\rho _{i}),
\end{eqnarray*}%
where we denoted for simplicity:%
\begin{equation}
w(x,y|\rho )\allowbreak =\allowbreak (1-\rho ^{2})^{2}-4xy\rho (1+\rho
^{2})+4\rho ^{2}(x^{2}+y^{2}).  \label{ww}
\end{equation}

Hence now the density $f_{K2k}(x|a_{1},\ldots ,a_{2k})$ will have the
following form that we will denote by $f_{Mk}(x|\mathbf{y}_{k},\mathbf{\rho }%
_{k}):$%
\begin{equation}
f_{Mk}(x|\mathbf{y}_{k},\mathbf{\rho }_{k})\allowbreak =\allowbreak A_{2k}%
\frac{2\sqrt{1-x^{2}}}{\pi \prod_{j=1}^{k}w(x,y_{i}|\rho _{i})},
\label{dens}
\end{equation}%
with $\left\vert \rho _{i}\right\vert <1,$ $\left\vert y_{i}\right\vert \leq
1.$

Following Remark \ref{part} we have

\begin{lemma}
\label{szcz}i) $S_{1}^{(2)}\allowbreak =\allowbreak 2\rho y$, $%
S_{2}^{(2)}=\allowbreak \rho ^{2}$, $A_{2}\allowbreak =\allowbreak 1-\rho
^{2},$

ii) 
\begin{gather*}
S_{1}^{(4)}(\mathbf{a}_{4})\allowbreak =\allowbreak y_{1}\rho
_{1}\allowbreak +\allowbreak y_{2}\rho _{2}, \\
S_{2}^{(4)}(\mathbf{a}_{4})\allowbreak =\allowbreak \rho _{1}^{2}\allowbreak
+\allowbreak \rho _{2}^{2}\allowbreak +\allowbreak 4y_{1}y_{2}\rho _{1}\rho
_{2}, \\
S_{3}^{(4)}(\mathbf{a}_{4})\allowbreak =\allowbreak 2\rho _{1}\rho _{2}(\rho
_{1}y_{1}\allowbreak +\allowbreak \rho _{2}y_{2}), \\
A_{4}\allowbreak =\allowbreak (1-\rho _{1}^{2})(1-\rho
_{2}^{2})w(y_{1},y_{2}|\rho _{1}\rho _{2})/(1-\rho _{1}^{2}\rho _{2}^{2}).
\end{gather*}

iii) 
\begin{gather*}
S_{1}^{(6)}(\mathbf{a}_{6})\allowbreak =\allowbreak \rho
_{1}y_{1}\allowbreak +\allowbreak \rho _{2}y_{2}\allowbreak +\allowbreak
\rho _{3}y_{3}, \\
S_{2}^{(6)}(\mathbf{a}_{6})\allowbreak =\allowbreak \rho _{1}^{2}\allowbreak
+\allowbreak \rho _{2}^{2}\allowbreak +\allowbreak \rho _{3}^{2}\allowbreak
+\allowbreak 4(\rho _{1}\rho _{2}y_{1}y_{2}\allowbreak +\allowbreak \rho
_{1}\rho _{3}y_{1}y_{3}\allowbreak +\allowbreak \rho _{2}\rho
_{3}y_{2}y_{3}),
\end{gather*}%
\newline
\begin{eqnarray*}
S_{3}^{(6)}(\mathbf{a}_{6})\allowbreak &=&\allowbreak 2(\rho _{1}^{2}+\rho
_{3}^{2})\rho _{2}y_{2}\allowbreak +\allowbreak 2(\rho _{2}^{2}+\rho
_{3}^{2})\rho _{1}y_{1}\allowbreak \\
&&+\allowbreak 2(\rho _{1}^{2}+\rho _{2}^{2})\rho _{3}y_{3}\allowbreak
+\allowbreak 8\rho _{1}\rho _{2}\rho _{3}y_{1}y_{2}y_{3},
\end{eqnarray*}%
$\allowbreak $ \newline
\begin{equation*}
S_{4}^{(6)}(\mathbf{a}_{6})\allowbreak =\allowbreak \rho _{1}^{2}\rho
_{2}^{2}\allowbreak +\allowbreak \rho _{2}^{2}\rho _{3}^{2}\allowbreak
+\allowbreak \rho _{1}^{2}\rho _{3}^{2}\allowbreak +\allowbreak 4\rho
_{1}\rho _{2}\rho _{3}(\rho _{3}y_{1}y_{2}\allowbreak +\rho
_{2}y_{1}y_{3}\allowbreak +\allowbreak \rho _{1}y_{2}y_{3}),
\end{equation*}%
\newline
\begin{eqnarray*}
S_{5}^{(6)}(\mathbf{a}_{6})\allowbreak &=&\allowbreak 2\rho _{1}\rho
_{2}\rho _{3}(\rho _{1}\rho _{2}y_{3}\allowbreak +\allowbreak \rho _{1}\rho
_{3}y_{2}\allowbreak +\allowbreak \rho _{2}\rho _{3}y_{1}),~S_{6}^{(6)}(%
\mathbf{a}_{6})\allowbreak =\allowbreak \rho _{1}^{2}\rho _{2}^{2}\rho
_{3}^{2}, \\
A_{6}\allowbreak &=&\allowbreak (1-\rho _{1}^{2})(1-\rho _{2}^{2})(1-\rho
_{3}^{2})\frac{w(y_{1},y_{2}|\rho _{1}\rho _{2})w(y_{2},y_{3}|\rho _{2}\rho
_{3})w(y_{1},y_{3}|\rho _{1}\rho _{3})}{w3(y_{1},y_{2},y_{3}|\rho _{1},\rho
_{2},\rho _{3})}\allowbreak ,
\end{eqnarray*}%
where we denoted%
\begin{gather}
w3(y_{1},y_{2},y_{3}|\rho _{1},\rho _{2},\rho _{3})=(1-\rho _{1}^{2}\rho
_{2}^{2})(1-\rho _{2}^{2}\rho _{3}^{3})(1-\rho _{1}^{2}\rho _{3}^{2})(1-\rho
_{1}^{2}\rho _{2}^{2}\rho _{3}^{2})  \label{w3} \\
-4\rho _{1}\rho _{2}\rho _{3}(1+\rho _{1}^{2}\rho _{2}^{2}\rho
_{3}^{2})(\rho _{1}(1-\rho _{2}^{2})(1-\rho _{3}^{2})y_{2}y_{3}\allowbreak 
\notag \\
+\allowbreak \rho _{2}(1-\rho _{1}^{2})(1-\rho
_{3}^{2})y_{1}y_{3}\allowbreak +\allowbreak \rho _{3}(1-\rho
_{1}^{2})(1-\rho _{2}^{2})y_{1}y_{2})  \notag \\
+4\rho _{1}^{2}\rho _{2}^{2}\rho _{3}^{2}((1-\rho _{1}^{2})(1-\rho
_{2}^{2}\rho _{3}^{2})y_{1}^{2}\allowbreak +\allowbreak (1-\rho
_{2}^{2})(1-\rho _{1}^{2}\rho _{3}^{2})y_{2}^{2}\allowbreak +\allowbreak
(1-\rho _{3}^{2})(1-\rho _{1}^{2}\rho _{2}^{2})y_{3}^{2}).  \notag
\end{gather}
\end{lemma}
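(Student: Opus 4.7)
The plan is a direct verification, organized around the generating function identity
\begin{equation*}
\prod_{j=1}^{2k}(1+a_j t) \;=\; \prod_{i=1}^{k}\bigl(1 + 2\rho_i y_i t + \rho_i^2 t^2\bigr),
\end{equation*}
which follows from grouping the conjugate pairs and using $a_i + \bar a_i = 2\rho_i y_i$ and $a_i\bar a_i = \rho_i^2$. Reading off the coefficient of $t^j$ gives each $S_j^{(2k)}(\mathbf a_{2k})$ directly: expanding the product of two quadratic factors yields the four values $S_1^{(4)},\dots,S_4^{(4)}$ claimed in (ii), and expanding the product of three quadratic factors yields the six values $S_1^{(6)},\dots,S_6^{(6)}$ in (iii), each being a short polynomial in the $\rho_i$ and $y_i$ obtained by bookkeeping how the total degree $j$ splits among the three factors.

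To evaluate $A_{2k}$, I will feed these expressions for $S_j^{(2k)}$ into the explicit formulas recorded in Remark \ref{part}, so that $A_{2k}$ becomes the ratio of $N_k := \prod_{1\le j<\ell\le 2k}(1-a_j a_\ell)$ to a polynomial in the $S_j^{(2k)}$. For the numerator I group the pairs $(j,\ell)$ according to their interaction with complex conjugation: the $k$ diagonal pairs (within a single conjugate pair) contribute $\prod_i(1-\rho_i^2)$, while for each unordered $\{i,\ell\}$ with $i\ne \ell$ the four cross pairs contribute $|1-a_i a_\ell|^2\,|1-a_i\bar a_\ell|^2$. Using the product-to-sum identities
\begin{equation*}
\cos(\theta_i+\theta_\ell)+\cos(\theta_i-\theta_\ell) = 2 y_i y_\ell, \qquad \cos(\theta_i+\theta_\ell)\cos(\theta_i-\theta_\ell) = y_i^2+y_\ell^2-1,
\end{equation*}
this simplifies to exactly $w(y_i,y_\ell\mid\rho_i\rho_\ell)$, with $w$ as in \eqref{ww}. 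Hence $N_k = \prod_{i=1}^{k}(1-\rho_i^2)\cdot\prod_{i<\ell} w(y_i,y_\ell\mid\rho_i\rho_\ell)$, which is precisely the numerator appearing in each of $A_2,\,A_4,\,A_6$.

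It remains to simplify the denominators. For $A_2$ there is no denominator and the claim is trivial; for $A_4$, the denominator is $1-S_4^{(4)}=1-\rho_1^2\rho_2^2$, immediate from (ii). The substantial step, and the main obstacle, is the $A_6$ case: one must substitute the six expressions for $S_1^{(6)},\dots,S_6^{(6)}$ into the polynomial
\begin{equation*}
1-S_4+S_1 S_5-S_5^2-S_6-S_1^2 S_6+S_2 S_6+S_4 S_6+S_1 S_5 S_6-S_6^2-S_2 S_6^2+S_6^3
\end{equation*}
and verify that the result equals $w3(y_1,y_2,y_3,\rho_1,\rho_2,\rho_3)$ of \eqref{w3}. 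The organizing observation is that $S_6^{(6)}=\rho_1^2\rho_2^2\rho_3^2$ contains no $y_i$, so I would group the monomials of the denominator by their power of $S_6$ (powers $0,1,2,3$) and show that this grouping matches the four homogeneous layers of $w3$ (the purely-$\rho$ piece, the piece linear in each of $y_iy_\ell$, the piece quadratic in a single $y_i$, and the constant multiplier $\rho_1^2\rho_2^2\rho_3^2$ terms). Each resulting identity is polynomial in $\rho_1,\rho_2,\rho_3$ and at most quadratic in each $y_i$, and so can be checked by direct expansion term by term (conveniently verifiable by computer algebra).
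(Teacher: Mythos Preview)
Your proposal is correct and is, at bottom, the same direct algebraic verification the paper performs: the paper's entire proof reads ``All calculations were done using Mathematica 10.'' Your write-up is more informative because it isolates the two structural observations that make the computation transparent---reading the $S_j^{(2k)}$ off the factorization $\prod_j(1+a_jt)=\prod_i(1+2\rho_iy_it+\rho_i^2t^2)$, and grouping $\prod_{j<\ell}(1-a_ja_\ell)$ into the $k$ ``diagonal'' factors $(1-\rho_i^2)$ and the $|1-a_ia_\ell|^2|1-a_i\bar a_\ell|^2=w(y_i,y_\ell\mid\rho_i\rho_\ell)$ cross blocks---but the method is the same computation the paper defers to software.
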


\begin{proof}
All calculations were done using Mathematica 10.
\end{proof}

\begin{remark}
\label{bivariate}Recall that for all $\left\vert x\right\vert ,\left\vert
y\right\vert \leq 1$ and $\left\vert \rho \right\vert <1$ we have the
following useful expansion: 
\begin{equation}
\frac{1-\rho ^{2}}{w(x,y|\rho )}=\sum_{j=0}^{\infty }\rho
^{j}U_{j}(x)U_{j}(y),  \label{P-M}
\end{equation}%
which is nothing else but the famous Poisson--Mehler formula for $%
q\allowbreak =\allowbreak 0$ (for the reference, see e.g. \cite{IA}
(13.1.24) or for alternative proof \cite{Szablowski2010(1)}).
\end{remark}

Following the above mentioned remark we have%
\begin{gather}
f_{Mk}(x|\mathbf{y}_{k},\mathbf{\rho }_{k})=\frac{2A_{2k}}{\pi
\prod_{j=1}^{k}(1-\rho _{j}^{2})}\sqrt{1-x^{2}}\sum_{m_{1},m_{2},\ldots
m_{k}=0}^{\infty }\prod_{j=1}^{k}\rho
_{j}^{m_{j}}U_{m_{j}}(x)U_{m_{j}}(y_{j}),  \label{expk} \\
\prod_{j=1}^{k}(1-\rho _{j}^{2})/A_{2k}=\sum_{m_{1},m_{2},\ldots
m_{k}=0}^{\infty }V_{m_{1},\ldots ,m_{k}}\prod_{j=1}^{k}\rho
_{j}^{m_{j}}U_{m_{j}}(y_{j}),  \label{A2k}
\end{gather}%
where as before, above $V_{k_{1},\ldots ,k_{n}}\allowbreak =\allowbreak
\int_{-1}^{1}\frac{2}{\pi }\sqrt{1-x^{2}}\prod_{i=1}^{n}U_{k_{i}}(x)dx$.

Since each density $f_{Mk}$ can be presented as a linear combination of $%
f_{M1}(x|y_{i},\rho _{i})$ , $k\allowbreak =\allowbreak 1,\ldots ,k$ (by
simple fraction decomposition) we will analyze $f_{M1}$ first. We have the
following result:

\begin{theorem}
\label{K2}

i) $\forall y\in \lbrack -1,1]:$ $\int_{-1}^{1}f_{M1}(x|y,\rho
)dx\allowbreak =\allowbreak 1,$

ii) $\int_{-1}^{1}f_{M1}(x|y,\rho )\frac{2}{\pi }\sqrt{1-y^{2}}dy\allowbreak
=\allowbreak \frac{2}{\pi }\sqrt{1-x^{2}},$

iii) $\int_{-1}^{1}f_{M1}(x|y_{1},\rho _{1})f_{M1}(y_{1}|y_{2},\rho
_{2})dy_{1}\allowbreak =\allowbreak f_{M1}(x|y_{2},\rho _{1}\rho _{2}),$

iv) Polynomials orthogonal with respect to $f_{M1}$ are as follows: $%
P_{-1}(x|y,\rho )\allowbreak =\allowbreak 0$, $P_{0}(x|y,\rho )$, $%
P_{1}(x|y,\rho )\allowbreak =\allowbreak U_{1}(x)-2\rho y,$ and 
\begin{equation*}
P_{m}(x|y,\rho )=U_{m}(x)-2\rho yU_{m-1}(x)+\rho ^{2}U_{m-2}(x)
\end{equation*}%
for $m\geq 2.$
\end{theorem}

\begin{proof}
i) Either we use directly properties of $f_{K2}(x|a,b)$ and the fact that in
our case $ab\allowbreak =\allowbreak \rho ^{2},$ or we apply (\ref{P-M}).
ii) follows directly from i) and the fact that $w(x,y|\rho )\allowbreak
=\allowbreak w(y,x|\rho ).$ iii) we have: 
\begin{gather*}
\int_{-1}^{1}f_{M1}(x|y_{1},\rho _{1})f_{M1}(y_{1}|y_{2},\rho
_{2})dy_{1}\allowbreak = \\
\allowbreak \frac{2}{\pi }\sqrt{1-x^{2}}\int_{-1}^{1}\frac{2}{\pi }\frac{%
(1-\rho _{1}^{2})(1-\rho _{2}^{2})\sqrt{1-y_{1}^{2}}}{w(y_{1},x|\rho
_{1})w(y_{1},y_{2}|\rho _{2})}dy_{1}= \\
\frac{2}{\pi }(1-\rho _{1}^{2})(1-\rho _{2}^{2})\sqrt{1-x^{2}}%
/A_{4}\allowbreak =\allowbreak f_{M1}(x|y_{2},\rho _{1}\rho _{2}),
\end{gather*}%
since $A_{4}\allowbreak =\allowbreak \allowbreak (1-\rho _{1}^{2})(1-\rho
_{2}^{2})w(x,y_{2}|\rho _{1}\rho _{2})/(1-\rho _{1}^{2}\rho _{2}^{2}).$ iv)
We use assertions of Theorem \ref{p_ort} and Lemma \ref{szcz} i).
\end{proof}

\begin{remark}
Results of the Theorem \ref{K2} indicate possible applications of the
distributions $f_{M1}$ and Wigner in multivariate analysis and stochastic
processes. More precisely assertion i) shows that $f_{M1}(x|y,\rho )$ is in
fact a conditional distribution. ii) shows that $f_{M1}(x|y,\rho )f_{M0}(y)$
can be treated as a density of certain bivariate distribution with $f_{M0}$
that is Wigner distribution as its marginals. Finally iii) is nothing else
but the so-called Chapman--Kolmogorov property. These properties are known
and applied in stochastic processes, see, e.g. \cite{bms} and \cite%
{Szab-OU-W}. We quoted them for the sake of completeness of the paper and
also in order to present new proofs of these properties directly basing on
the general properties of generalized Kesten distributions discussed in the
first part of this paper.
\end{remark}

Following the above mentioned remark let us denote by $f_{2}(x,y|\rho )$ the
two-dimensional measure defined by:%
\begin{equation}
f_{2}(x,y|\rho )=f_{M1}(x|y,\rho )f_{M0}(y)=\frac{(1-\rho ^{2})\sqrt{%
(1-x^{2})(1-y^{2})}}{4\pi ^{2}w(x,y|\rho )}.  \label{2dim}
\end{equation}

\begin{remark}
Notice also that 
\begin{eqnarray*}
f_{M2}(x|y_{1},y_{2},\rho _{1},\rho _{2}) &=&\frac{2}{\pi }\frac{\sqrt{%
1-x^{2}}(1-\rho _{1}^{2})(1-\rho _{2}^{2})w(y_{1},y_{2}|\rho _{1}\rho _{2})}{%
w(x,y_{1}|\rho _{1})w(x,y_{2}|\rho _{2})(1-\rho _{1}^{2}\rho _{2}^{2})} \\
&=&\frac{f_{M1}(y_{1}|x,\rho _{1})f_{M1}(x|y_{2},\rho _{2})f_{M0}(y_{2})}{%
f_{M1}(y_{1}|y_{2},\rho _{1}\rho _{2})f_{M0}(y_{2})},
\end{eqnarray*}%
which can be interpreted in the following way. Let us consider 3 element
discrete Markov chain $X_{1},X_{2},X_{3}$ such that transition density $%
X_{2}|X_{3}$ is $f_{M1}(x|y_{2},\rho _{2}),$ transition $X_{1}|X_{2}$ is $%
f_{M1}(x|y_{1},\rho _{1})$ while marginal density of $X_{3}$ is $%
f_{M0}(y_{2})$ then the conditional density of $X_{2}|X_{1},X_{3}$ is $%
f_{M2}(x|y_{1},\rho _{1},y_{2},\rho _{2}).$
\end{remark}

\begin{lemma}
$\int_{-1}^{1}\frac{2}{\pi }\sqrt{1-y_{1}^{2}}\frac{w3(y_{1},y_{2},y_{3}|%
\rho _{1},\rho _{2},\rho _{3})}{w(y_{1},y_{2}|\rho _{1}\rho
_{2})w(y_{2},y_{3}|\rho _{2}\rho _{3})w(y_{1},y_{3}|\rho _{1}\rho _{3})}%
\allowbreak dy_{1}=\frac{1-\rho _{2}^{2}\rho _{3}^{2}}{w(y_{2},y_{3,}|\rho
_{2}\rho _{3})}.$
\end{lemma}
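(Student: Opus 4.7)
The plan is to reinterpret the polynomial $w3$ probabilistically: it encodes the normalization of $f_{M3}$. From Lemma~\ref{szcz}(iii), the formula for $A_{6}$ can be rearranged as
\begin{equation*}
\frac{w3(y_{1},y_{2},y_{3},\rho _{1},\rho _{2},\rho _{3})}{\prod_{i<j}w(y_{i},y_{j}|\rho _{i}\rho _{j})}=\frac{\prod_{i=1}^{3}(1-\rho _{i}^{2})}{A_{6}}=\prod_{i=1}^{3}(1-\rho _{i}^{2})\int_{-1}^{1}\frac{2\sqrt{1-x^{2}}}{\pi \prod_{j=1}^{3}w(x,y_{j}|\rho _{j})}\,dx,
\end{equation*}
where in the last step I used that $f_{M3}(x|\mathbf{y}_{3},\boldsymbol{\rho }_{3})$ is a probability density. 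This turns an unwieldy explicit polynomial into an integral that is far easier to manipulate.

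Next, I would factor $1/w(y_{2},y_{3}|\rho _{2}\rho _{3})$ out of the left-hand side of the lemma (it does not depend on $y_{1}$) and substitute the above identity for the remaining ratio involving $w3$. After applying Fubini the left-hand side becomes
\begin{equation*}
\prod_{i=1}^{3}(1-\rho _{i}^{2})\int_{-1}^{1}\frac{2\sqrt{1-x^{2}}}{\pi w(x,y_{2}|\rho _{2})w(x,y_{3}|\rho _{3})}\left[ \int_{-1}^{1}\frac{2\sqrt{1-y_{1}^{2}}}{\pi w(x,y_{1}|\rho _{1})}\,dy_{1}\right] dx.
\end{equation*}

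The bracketed inner integral evaluates to $1/(1-\rho _{1}^{2})$, because by Theorem~\ref{K2}(i) the map $y_{1}\mapsto \tfrac{2(1-\rho _{1}^{2})\sqrt{1-y_{1}^{2}}}{\pi w(x,y_{1}|\rho _{1})}=f_{M1}(y_{1}|x,\rho _{1})$ is a probability density. The factor $(1-\rho _{1}^{2})$ then cancels, leaving
\begin{equation*}
(1-\rho _{2}^{2})(1-\rho _{3}^{2})\int_{-1}^{1}\frac{2\sqrt{1-x^{2}}}{\pi w(x,y_{2}|\rho _{2})w(x,y_{3}|\rho _{3})}\,dx,
\end{equation*}
which is exactly $(1-\rho _{2}^{2})(1-\rho _{3}^{2})/A_{4}$ for parameters $\rho _{2},y_{2},\rho _{3},y_{3}$. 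Inserting the closed form for $A_{4}$ from Lemma~\ref{szcz}(ii) collapses everything to $(1-\rho _{2}^{2}\rho _{3}^{2})/w(y_{2},y_{3}|\rho _{2}\rho _{3})$, as desired.

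The main obstacle is conceptual rather than computational: one has to notice that the complicated polynomial $w3$ in the numerator should not be attacked directly but rather traded for the $f_{M3}$-normalization integral. Once this substitution is made, the proof reduces to two standard normalizations (of $f_{M1}$ and $f_{M2}$) already encoded in Lemma~\ref{szcz}, and the use of Fubini is harmless because $|\rho _{i}|<1$ and $|y_{i}|\leq 1$ keep every $w(\cdot ,\cdot |\rho )$ bounded away from zero on $[-1,1]$.
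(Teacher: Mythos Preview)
Your argument is correct. Both you and the paper begin from the same ingredient, the identity
\[
\frac{w3(y_{1},y_{2},y_{3},\rho _{1},\rho _{2},\rho _{3})}{\prod_{i<j}w(y_{i},y_{j}\mid \rho _{i}\rho _{j})}=\frac{\prod_{i=1}^{3}(1-\rho _{i}^{2})}{A_{6}},
\]
coming from Lemma~\ref{szcz}(iii), but from there the two proofs diverge. The paper inserts the Chebyshev expansion~(\ref{A2k}) for $\prod (1-\rho _{i}^{2})/A_{6}$, integrates term by term against $\tfrac{2}{\pi }\sqrt{1-y_{1}^{2}}$ to kill all $m_{1}\neq 0$ terms by orthogonality, reduces $V_{0,m_{2},m_{3}}$ to $\delta _{m_{2},m_{3}}$, and then resums the diagonal series via the Poisson--Mehler identity~(\ref{P-M}). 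Your route instead rewrites $1/A_{6}$ as the normalizing integral of $f_{M3}$, swaps the $y_{1}$ and $x$ integrations by Fubini, and collapses everything using the $f_{M1}$ and $f_{M2}$ normalizations already recorded in Theorem~\ref{K2}(i) and Lemma~\ref{szcz}(ii). In effect the paper works on the Fourier--Chebyshev side while you stay entirely on the ``real'' side; your argument avoids any infinite series and makes the probabilistic content (successive marginalizations) transparent, whereas the paper's version keeps closer to the Chebyshev-polynomial machinery emphasized throughout the article. One cosmetic point: the sentence about ``factoring $1/w(y_{2},y_{3}\mid \rho _{2}\rho _{3})$ out'' is superfluous, since that factor is already absorbed in the identity you substitute; the displayed double integral you write next is nevertheless exactly what one obtains, so the argument stands.
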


\begin{proof}
We start from (\ref{expk}) considered for $k\allowbreak =\allowbreak 3$ and
get 
\begin{gather*}
\prod_{j=1}^{3}(1-\rho _{i}^{2})/A_{6}\allowbreak =\allowbreak \frac{%
w3(y_{1},y_{2},y_{3}|\rho _{1},\rho _{2},\rho _{3})}{w(y_{1},y_{2}|\rho
_{1}\rho _{2})w(y_{2},y_{3}|\rho _{2}\rho _{3})w(y_{1},y_{3}|\rho _{1}\rho
_{3})}\allowbreak \\
=\sum_{m_{1}=0}^{\infty }\sum_{m_{2}=0}^{\infty }\sum_{m_{3}=0}^{\infty
}\rho _{1}^{m_{1}}\rho _{2}^{m_{2}}\rho
_{3}^{m_{3}}V_{m_{1},m_{2},m_{3}}U_{m_{1}}(y_{1})U_{m_{2}}(y_{2})U_{m_{3}}(y_{3})
\end{gather*}%
basing on Lemma \ref{szcz}. Now using (\ref{A2k}) and again Lemma \ref{szcz}
we get: 
\begin{gather*}
\int_{-}^{1}\frac{2}{\pi }\sqrt{1-y_{1}^{2}}\frac{w3(y_{1},y_{2},y_{3}|\rho
_{1},\rho _{2},\rho _{3})}{w(y_{1},y_{2}|\rho _{1}\rho
_{2})w(y_{2},y_{3}|\rho _{2}\rho _{3})w(y_{1},y_{3}|\rho _{1}\rho _{3})}%
dy_{1}\allowbreak \\
=\sum_{m_{2}=0}^{\infty }\sum_{m_{3}=0}^{\infty }\rho _{2}^{m_{2}}\rho
_{3}^{m_{3}}V_{0,m_{2},m_{3}}U_{m_{2}}(y_{2})U_{m_{3}}(y_{3})\allowbreak \\
=\allowbreak \sum_{m_{2}=0}^{\infty }(\rho _{2}\rho
_{3})^{m_{2}}U_{m_{2}}(y_{2})U_{m_{2}}(y_{3})\allowbreak =\allowbreak \frac{%
1-\rho _{2}^{2}\rho _{3}^{2}}{w(y_{2},y_{3,}|\rho _{2}\rho _{3})}.
\end{gather*}
\end{proof}

From this Lemma we derive the following important conclusion. Namely that
the following function: 
\begin{gather}
g(y_{1},y_{2},y_{3}|\rho _{1},\rho _{2},\rho _{3})=  \label{3Dden} \\
\frac{8}{\pi ^{3}}\sqrt{1-y_{1}^{2}}\sqrt{1-y_{2}^{2}}\sqrt{1-y_{3}^{2}}%
\frac{w3(y_{1},y_{2},y_{3}|\rho _{1},\rho _{2},\rho _{3})}{%
w(y_{1},y_{2}|\rho _{1}\rho _{2})w(y_{2},y_{3}|\rho _{2}\rho
_{3})w(y_{1},y_{3}|\rho _{1}\rho _{3})}  \notag
\end{gather}%
can be treated as the density of some $3D$ distribution with $2D$ marginals
equal to $f_{2M}(y_{1},y_{2}|\rho _{1}\rho _{2}),$ $f_{2M}(y_{1},y_{3}|\rho
_{1}\rho _{3}),$ $f_{2M}(y_{2},y_{3}|\rho _{2}\rho _{3}).$

\end{document}